\newcommand\reallywidehat[1]{%
\savestack{\tmpbox}{\stretchto{%
  \scaleto{%
    \scalerel*[\widthof{\ensuremath{#1}}]{\kern-.6pt\bigwedge\kern-.6pt}%
    {\rule[-\textheight/2]{1ex}{\textheight}}
  }{\textheight}%
}{0.5ex}}%
\stackon[1pt]{#1}{\tmpbox}%
}
\newcommand{\abs}[1]{\left| #1 \right|} 
\newcommand{\norm}[1]{\left\| #1 \right\|}
\newcommand{\B}{\mathbb B}
\newcommand{\R}{\mathbb R}
\newcommand{\N}{\mathbb N}
\newcommand{\Z}{\mathbb Z}
\renewcommand{\H}{\mathbb{H}}
\renewcommand{\S}{\mathbb{S}}
\newcommand{\eps}{\varepsilon}
\DeclareMathOperator{\vol}{vol}
\DeclareMathOperator{\D}{D}
\let\d\relax
\DeclareMathOperator{\d}{d}
\DeclareMathOperator{\spt}{spt}
\DeclareMathOperator{\dist}{dist}
\DeclareMathOperator{\diam}{diam}
\DeclareMathOperator{\inj}{inj}
\DeclareMathOperator{\Riem}{Riem}
\DeclareMathOperator{\tr}{tr}
\DeclareMathOperator{\reach}{reach}
\newtheorem{theorem}{Theorem}
\newtheorem*{theorem*}{Theorem}
\newtheorem{prop}[theorem]{Proposition}
\newtheorem{mainthm}{Theorem}           
\theoremstyle{definition}
\newtheorem{definition}{Definition}
\newtheorem{example}[theorem]{Example}
\theoremstyle{remark}
\newtheorem{remark}{Remark}
\newtheorem*{remark*}{Remark}
\newtheorem*{notation}{Notation}
\title{Singular extension of critical Sobolev mappings with values into 
complete Riemannian manifolds}
\date{}
\author{Federico Luigi Dipasquale ($*$)}
\newcommand{\Addresses}{{
  \bigskip
  \footnotesize

  Federico Luigi Dipasquale (Corresponding author), \textsc{Scuola Superiore Meridionale, via Mezzocannone, 4 80138 Napoli (Italy)}\par\nopagebreak
  \textit{E-mail address}: \texttt{f.dipasquale@ssmeridionale.it}
}}
\begin{document}

\maketitle

\begin{abstract}
   Relying on a recent criterion, due to A.~Petrunin \cite{P}, 
   to check if a complete, non-compact, Riemannian 
   manifold admits an isometric embedding with 
   positive reach into a Euclidean space, 
   we extend to manifolds with such a property
   the singular extension results of B.~Bulanyi and 
   J.~Van~Schaftingen \cite{BVS} for maps in the critical, nonlinear Sobolev space 
   $W^{m/(m+1),m+1}\left(X^m,\mathcal{N}\right)$, 
   where $m \in \N \setminus \{0\}$, $\mathcal{N}$ is a compact Riemannian 
   manifold, and $X^m$ is either the sphere $\S^m = \partial \B^{m+1}_1$, 
   the plane $\R^m$, or 
   again $\S^m$ but seen as the boundary sphere of the Poincar\'{e} ball model 
   of the hyperbolic space $\H^{m+1}$. 
   As in \cite{BVS}, we obtain that the extended maps satisfy an exponential 
   weak-type Sobolev-Marcinkiewicz estimate. Finally, we provide some 
   illustrative examples of non-compact target manifolds to which our 
   results apply.
\end{abstract}

\section*{Introduction}

The purpose of this note is to point out explicitly some direct consequences 
of the proof of the singular extensions theorems in \cite{BVS}, 
as well as their connection with
a recent, sufficient criterion \cite{P} to 
establish if a complete Riemannian manifold $\mathcal{N}$ admits an 
isometric Euclidean embedding with positive reach. 
We recall that the {\em reach} of a submanifold $\mathcal{S}$ of $\R^\nu$ is the 
supremum of the sizes of the tubular neighbourhoods of $\mathcal{S}$ 
on which the nearest-point projection onto $\mathcal{S}$ is well defined. 
In the following, we will denote $\reach{\mathcal{N}}$ the reach of a 
Euclidean isometric embedding of $\mathcal{N}$.

To see in which way the notion of reach is connected with the results 
in~\cite{BVS}, we recall that in~\cite[Theorem~1.1]{BVS}, B.~Bulanyi and 
J.~Van~Schaftingen proved that, if 
$\mathcal{N}$ is \emph{any} compact, smooth Riemannian manifold 
isometrically embedded in 
$\R^\nu$, for some $\nu \in \N$, via Nash's theorem, then any map $u$ in the 
critical, nonlinear Sobolev space 
$W^{m/(m+1),m+1}\left(\mathbb{S}^m, \mathcal{N}\right)$ can be extended to a map 
$U \in W^{1,1}\left(\mathbb{B}^{m+1}_1, \mathcal{N}\right)$ 
whose trace on $\mathbb{S}^m$ is $u$ and satisfying the following 
weak-type exponential Marcinkiewicz-Sobolev estimate:  
for every $t \in (0,\infty)$,
\begin{multline}\label{eq:wSM-est}
	t^{m+1} \mathcal{L}^{m+1}\left(\left\{ \left. x \in \mathbb{B}^{m+1}_1 \,\right\vert \, \abs{{\D}U(x)} \geq t \right\} \right) \\
	\leq A \exp \Biggl( B \iint \limits_{\substack{ (x,y) \in \S^m \times \S^m \\ d(u(x),u(y)) \geq \delta}} \frac{1}{\abs{x-y}^{2m}} \,{\d}x\,{\d}y \Biggr) \iint \limits_{\S^m \times \S^m} \frac{d(u(x),u(y))^{m+1}}{\abs{x-y}^{2m}}\,{\d}x\,{\d}y.
\end{multline}
Here, $A$ is a constant depending only on $m$, $B$ depends only on $m$ and on the ratio 
$\diam \mathcal{N} / \reach \mathcal{N}$, and $\delta$ depends only on $m$ and on $\reach \mathcal{N}$ 
(in fact, it is the ratio of a dimensional constant and $\reach \mathcal{N}$; 
we recall that, for compact $\mathcal{N}$, 
$\reach \mathcal{N}$ is strictly positive, see, e.g., \cite[Section~4]{Federer}). 
The estimate~\eqref{eq:wSM-est} is then extended to the case in which 
$\S^m$ is replaced by $\R^m$ and $\B^{m+1}_1$ by $\R^{m+1}_+$ and to the case 
in which $\S^m$ is seen as the boundary sphere\footnote{That is to say, the 
set of points at infinity of the Poincar\'e ball model, in the sense explained, for instance, in \cite[Section~3.L]{GHLf}.} of the Poincar\'{e} ball model
of the hyperbolic space $\H^{m+1}$. 
(See, respectively, Theorem~1.2 and Theorem~1.3 in \cite{BVS}.)

We recall that, for $\Omega \subset \R^{m+1}$ an open set and for a 
Riemannian manifold $\mathcal{N}$, viewed as 
a submanifold of $\R^\nu$ thanks to Nash's theorem, the space 
$W^{1,p}\left(\Omega, \mathcal{N} \right)$ is usually defined as  
\[
		W^{1,p}\left( \Omega, \mathcal{N} \right) := 
		\left\{ U \in W^{1,p}(\Omega,\R^\nu) \,:\, U(x) \in \mathcal{N} \,\,\mbox{for a.e. } x \in \Omega  \right\}.
\]
In case $\mathcal{N}$ is complete, it is known that this definition does 
not depend on the choice of the isometric Euclidean embedding 
(e.g., \cite[Proposition~2.1]{BousquetPonceVanSchaftingen}). 
In particular, one immediately sees this is the case if 
$\mathcal{N}$ admits an isometric Euclidean embedding with positive reach.  
Indeed, in such a case $\mathcal{N}$ is closed as a subset of $\R^\nu$ 
(c.f.~\cite[Remark~4.2]{Federer}), 
and, consequently, it is complete as a Riemannian manifold. (To see this, 
observe that 
any Cauchy sequence in $\mathcal{N}$ is bounded with respect to the 
geodesic distance and that, since the embedding is isometric, 
locally around any point of $\mathcal{N}$, the geodesic distance and the 
Euclidean distance are comparable.)
We define the space $W^{s,p}(\partial \Omega,\mathcal{N})$ as
\[
	W^{s,p}\left(\partial \Omega,\mathcal{N}\right) := 
	\left\{ u \in L^p\left(\partial \Omega, \R^\nu\right) \,:\, u(x) \in \mathcal{N} \mbox{ a.e. in } \partial \Omega \mbox{ and } \iint\limits_{\partial \Omega \times \partial \Omega} \frac{d(u(x),u(y))^{p}}{\abs{x-y}^{sp+m}}\,{\d}x\,{\d}y < \infty \right\},
\]
where $d(\cdot,\cdot) : \mathcal{N} \times \mathcal{N} \to \R_+$ denotes 
the geodesic distance in $\mathcal{N}$. This definition is independent of 
the choice of the isometric embedding of $\mathcal{N}$ into $\R^\nu$ 
(independently of the compactness of $\mathcal{N}$). 
A careful reading of the proofs of the results in \cite{BVS} shows that 
the compactness of $\mathcal{N}$ is used only for two purposes: {($i$)} to 
ensure that the image of the Nash isometric embedding of $\mathcal{N}$ into 
$\R^\nu$ has positive reach, with well-defined and smooth nearest-point 
projection, and {($ii$)} to ensure that the so-called \emph{gap potential}
\[
\iint \limits_{{\substack{ (x,y) \in \S^m \times \S^m \\ d(u(x),u(y)) \geq \delta}}} \frac{1}{\abs{x-y}^{2m}} \,{\d}x\,{\d}y
\]
controls the truncated Gagliardo energy of $u$. More precisely, 
for any $\delta > 0$, the gap 
potential trivially satisfies the inequality 
\[
\iint \limits_{{\substack{ (x,y) \in \S^m \times \S^m \\ d(u(x),u(y)) \geq \delta}}} \frac{1}{\abs{x-y}^{2m}} \,{\d}x\,{\d}y 
\leq
\frac{1}{\delta^{m+1}}\iint \limits_{{\substack{ (x,y) \in \S^m \times \S^m \\ d(u(x),u(y)) \geq \delta}}} \frac{d(u(x),u(y))^{m+1}}{\abs{x-y}^{2m}} \,{\d}x\,{\d}y.
\]
In addition, if $\diam_{\mathcal{N}}(u(\S^m))$ is bounded 
(for instance, if $\mathcal{N}$ is compact), then 
\[
\iint \limits_{{\substack{ (x,y) \in \S^m \times \S^m \\ d(u(x),u(y)) \geq \delta}}} \frac{d(u(x),u(y))^{m+1}}{\abs{x-y}^{2m}} \,{\d}x\,{\d}y \\
\leq 
\diam_{\mathcal{N}}(u(\S^m))^{m+1}
\iint \limits_{{\substack{ (x,y) \in \S^m \times \S^m \\ d(u(x),u(y)) \geq \delta}}} \frac{1}{\abs{x-y}^{2m}} \,{\d}x\,{\d}y. 
\]
From these inequalities and a careful reading of the proofs, one immediately 
deduces that \cite[Theorems~1.1, 1.2, and 1.3]{BVS} still hold if 
$\mathcal{N}$ is any Riemannian manifold admitting an isometric Euclidean embedding 
with positive reach and if one restricts to maps $u$ whose range has bounded diameter in $\mathcal{N}$. 
   
In this note, we put the observations above in precise and slightly more 
general terms. Our main results are the following theorems, which are direct extensions of \cite[Theorem~1.1, 1.2, and 1.3]{BVS}. In those, we always assume that $\mathcal{N}$ is connected  
and we notice that, as observed in \cite[p.~284]{M-VS1}, this is 
a harmless condition in the extension problem considered in this paper, 
because by \cite[Theorem~6.5]{BrezisMironescu} it follows that 
the essential range of any function $V \in W^{1,1}_{\rm loc}(\Omega, \R^\nu)$ is connected 
whenever $\Omega \subset \R^{m+1}$ is a connected, open set or  
$\Omega = \overline{\R^{m+1}_+}$.

\begin{theorem}\label{thm:I}
   Let $m \in \N \setminus \{0\}$ and let $\mathcal{N}$ be a connected  
   Riemannian 
   manifold admitting an isometric Euclidean embedding with positive 
   reach. 
   There exists constants $A$, $B$, 
   $\delta \in (0,\infty)$, depending only on $m$ and the reach of 
   $\mathcal{N}$ (and further specified in~\eqref{eq:constants} below), 
   such that for every 
   $u \in W^{m/(m+1),m+1}\left(\S^m, \mathcal{N}\right)$ there exists a mapping 
   $U \in W^{1,1}\left(\B^{m+1}_1, \mathcal{N}\right)$ such that 
   $\tr_{\S^m} U = u$ and for every $t \in (0,\infty)$
   \begin{multline}\label{eq:I-1}
   	t^{m+1} \mathcal{L}^{m+1}\left(\left\{ \left. x \in \mathbb{B}^{m+1}_1 \,\right\vert \, \abs{{\D}U(x)} \geq t \right\} \right) \\
	\leq A \exp \Biggl( B \iint \limits_{\S^m \times \S^m} \frac{d(u(x),u(y))^{m+1}}{\abs{x-y}^{2m}} \,{\d}x\,{\d}y \Biggr) \iint \limits_{\S^m \times \S^m} \frac{d(u(x),u(y))^{m+1}}{\abs{x-y}^{2m}}\,{\d}x\,{\d}y,
   \end{multline}
   If, in addition, $\diam_{\mathcal{N}}{u(\S^m)} \leq L$, then
   \begin{multline}\label{eq:I-2}
	t^{m+1} \mathcal{L}^{m+1}\left(\left\{ \left. x \in \mathbb{B}^{m+1}_1 \,\right\vert \, \abs{{\D}U(x)} \geq t \right\} \right) \\
	\leq A \exp \Biggl( B' \iint \limits_{\substack{ (x,y) \in \S^m \times \S^m \\ d(u(x),u(y)) \geq \delta}} \frac{1}{\abs{x-y}^{2m}} \,{\d}x\,{\d}y \Biggr) \iint \limits_{\S^m \times \S^m} \frac{d(u(x),u(y))^{m+1}}{\abs{x-y}^{2m}}\,{\d}x\,{\d}y,   
   \end{multline}
   where $B' = L^{m+1} B$ depends only on $m$, $L$, and $\reach \mathcal{N}$. 
   Moreover, one can take 
   $U \in C\left( \B^{m+1}_1 \setminus S, \mathcal{N} \right)$, where the singular set 
   $S \subset \B^{m+1}_1$ is a finite set whose cardinality is controlled by the 
   right-hand side of~\eqref{eq:I-1} or, in case $\diam_{\mathcal{N}}{u(\S^m)} \leq L$, 
   by the right-hand side of~\eqref{eq:I-2}.
\end{theorem}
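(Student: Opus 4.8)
The plan is to run the proof of \cite[Theorem~1.1]{BVS} essentially verbatim, with the positive-reach embedding playing the role that the Nash embedding plays there, and then to convert the gap potential appearing in the exponential factor into the two forms displayed in~\eqref{eq:I-1} and~\eqref{eq:I-2} by means of the two elementary inequalities recorded in the introduction.

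First I would fix an isometric embedding $\mathcal{N} \hookrightarrow \R^\nu$ with $r := \reach \mathcal{N} > 0$ and recall that on the open tube $\{x \in \R^\nu : \dist(x,\mathcal{N}) < r\}$ the nearest-point projection $\Pi$ onto $\mathcal{N}$ is well-defined and $C^1$ by the results of \cite{LeobacherSteinicke}; this is exactly the single geometric input that compactness supplied through the Nash embedding in \cite{BVS}, namely role~$(i)$. Since positive reach forces $\mathcal{N}$ to be complete, the space $W^{m/(m+1),m+1}(\S^m,\mathcal{N})$ and the requirement $U \in W^{1,1}_{\rm loc}(\B^{m+1}_1,\mathcal{N})$ are unambiguous, independently of the chosen embedding.

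The heart of the argument is then to reproduce the dyadic extension scheme of \cite{BVS} line by line, replacing every occurrence of the Nash projection by $\Pi$. That construction is purely local: on each \emph{good} dyadic cube---one on which the oscillation of $u$ stays below the threshold $\delta$---one averages $u$ and projects the average back onto $\mathcal{N}$ through $\Pi$, while the \emph{bad} cubes, where this fails, are the ones that produce the finite singular set $S$. The only constraint on $\Pi$ is that these averages lie inside the tube of radius $r$, which is guaranteed as soon as $\delta$ is a fixed dimensional multiple of $r$; this is precisely the stated dependence of $\delta$ on $m$ and $\reach \mathcal{N}$. Crucially, this local step never invokes a global bound on $u$: it uses only the controlled oscillation on good cubes, while the number and position of the bad cubes---hence the cardinality of $S$---are counted entirely in terms of the gap potential. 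The output of this step is the estimate~\eqref{eq:wSM-est}, valid for every $u$ with finite Gagliardo energy, together with the continuity of $U$ off $S$ and the asserted bound on $\#S$.

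Finally I would deduce the two stated estimates. For~\eqref{eq:I-1} I would bound the gap potential in the exponent by the full Gagliardo energy, using the first inequality of the introduction (gap potential $\leq \delta^{-(m+1)}$ times the Gagliardo energy), which needs no boundedness and yields an exponent $B' = B\,\delta^{-(m+1)}$ depending only on $m$ and $r$. For~\eqref{eq:I-2}, under $\norm{u}_{L^\infty(\S^m)} \leq L$, I would instead keep the gap potential in the exponent and absorb into $B'$ the diameter factor that, in the compact case of \cite{BVS}, was controlled by $\diam \mathcal{N}$ (via the second inequality of the introduction): since the image of $u$ then has diameter at most $2L$, this gives $B' = (2L)^{m+1} B$ with $B$ depending only on $m$ and $r$. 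The hard part is the audit underlying the middle step---checking that compactness enters the \cite{BVS} argument \emph{only} through the projection $\Pi$ and the gap-potential bookkeeping, so that in particular the energy estimates on good cubes and the singularity count remain insensitive to any global size of the target.
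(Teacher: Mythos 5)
There is a genuine gap in your middle step. You assert that running the scheme of \cite{BVS} with the positive-reach projection yields the intermediate estimate~\eqref{eq:wSM-est} --- with the \emph{gap potential} in the exponential and $B$ depending only on $m$ and $\reach\mathcal{N}$ --- ``valid for every $u$ with finite Gagliardo energy'', on the grounds that ``the number and position of the bad cubes \dots are counted entirely in terms of the gap potential''. That counting claim fails for unbounded targets: the bad-cube count in \cite[Proposition~3.1]{BVS} is controlled by the \emph{truncated} Gagliardo energy $\iint \left(d(u(y),u(z))-\eta\delta\right)_+^{m+1}\abs{y-z}^{-2m}\,{\d}y\,{\d}z$, and this is comparable to the gap potential only through the pointwise bound $\left(d-\eta\delta\right)_+^{m+1}\leq (\diam\mathcal{N})^{m+1}$ (resp.\ $(2L)^{m+1}$) on the set where $d\geq\eta\delta$ --- which is exactly where compactness or the $L^\infty$ bound enters, and which is unavailable in the general case. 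Moreover, the exponential factor is not a post-processing of the final estimate: it arises from the choice of the grid parameter $\lambda$, which must be fixed \emph{before} the construction in terms of whatever quantity controls the truncated energy. Hence for unbounded $u$ you cannot first obtain a gap-potential exponent and then ``upgrade'' it to the Gagliardo energy via the elementary inequality; the case split must happen earlier. This is precisely what the paper does (for Theorem~\ref{thm:II}, from which Theorem~\ref{thm:I} follows by conformal parametrisation exactly as in \cite{BVS}, rather than by attacking $\S^m$ directly as you propose --- a minor structural difference): in the general case it bounds the truncated energy by the full Gagliardo energy via $\left(d-\eta\delta_{\mathcal{N}}/2\right)_+\leq d$ and takes $\lambda = 1+\exp\bigl( 2C_1\iint d(u(y),u(z))^{m+1}\abs{y-z}^{-2m}\,{\d}y\,{\d}z\bigr)$, which yields~\eqref{eq:I-1} directly; under $\norm{u}_{L^\infty(\S^m)}\leq L$ it bounds the truncated energy by $(2L)^{m+1}$ times the gap potential and chooses $\lambda$ accordingly, yielding~\eqref{eq:I-2} with $B'=(2L)^{m+1}B$.

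A quick consistency check exposes the problem internally: if your intermediate claim were true, then~\eqref{eq:I-2} would hold with $B'=B$ depending only on $m$ and $\reach\mathcal{N}$, with no $L^\infty$ hypothesis whatsoever --- a statement strictly stronger than the theorem, which the paper pointedly does not make. Indeed, your own deduction of~\eqref{eq:I-2} has to ``absorb the diameter factor'' $(2L)^{m+1}$ into $B'$, which only makes sense if boundedness already entered the middle step rather than a post-processing step; as written, your two final deductions are inconsistent with the intermediate estimate you claim. The remaining ingredients of your proposal --- the well-defined and smooth nearest-point projection on the tube via \cite{LeobacherSteinicke}, taking $\delta$ a fixed multiple of the reach, and the audit that compactness enters \cite{BVS} only through the projection and the gap-potential bookkeeping --- do agree with the paper.
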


More precisely, the proof shows that
\begin{equation}\label{eq:constants}
	A = A(m), \qquad B = \frac{C(m)}{(\reach{\mathcal{N}})^{m+1}}, \qquad
	B' = C(m) \left(\frac{2KL}{\reach{\mathcal{N}}} \right)^{m+1},  
\end{equation}
where $C(m) \in (0,\infty)$ depends only on $m$.
In the particular case $\mathcal{N}$ is compact, $\diam \mathcal{N}$ 
is finite and we can meaningfully bound $d(u(x),u(y))$ in the 
exponential in~\eqref{eq:I-1} with $\diam \mathcal{N}$. 
Consequently,~\eqref{eq:I-2} reduces to 
\cite[Eq.~(5)]{BVS}, with $B'$ depending only on $m$ and the ratio 
$\diam \mathcal{N} / \reach \mathcal{N}$, exactly as in \cite{BVS}. 

\begin{theorem}\label{thm:II}
   Let $m \in \N \setminus \{0\}$ and let $\mathcal{N}$ be a connected
   Riemannian 
   manifold admitting an isometric Euclidean embedding with positive 
   reach. 
   There exists constants $A$, $B$, 
   $\delta \in (0,\infty)$, 
   depending only on $m$ and the reach of $\mathcal{N}$ and given by~\eqref{eq:constants}, 
   such that for every 
   $u \in W^{m/(m+1),m+1}\left(\R^m, \mathcal{N}\right)$ there exists a mapping 
   $U \in W^{1,1}_{\rm loc}\left(\overline{\R^{m+1}_+}, \mathcal{N}\right)$ such that 
   $\tr_{\R^m} U = u$ and for every $t \in (0,\infty)$
   \begin{multline}\label{eq:II-1}
   	t^{m+1} \mathcal{L}^{m+1}\left(\left\{ \left. x \in \R^m_+ \,\right\vert \, \abs{{\D}U(x)} \geq t \right\} \right) \\
	\leq A \exp \Biggl( B \iint \limits_{\R^m \times \R^m} \frac{d(u(x),u(y))^{m+1}}{\abs{x-y}^{2m}} \,{\d}x\,{\d}y \Biggr) \iint \limits_{\R^m \times \R^m} \frac{d(u(x),u(y))^{m+1}}{\abs{x-y}^{2m}}\,{\d}x\,{\d}y.
   \end{multline}
   If, in addition, $\diam_{\mathcal{N}}{u(\S^m)} \leq L$, then
   \begin{multline}\label{eq:II-2}
	t^{m+1} \mathcal{L}^{m+1}\left(\left\{ \left. x \in \mathbb{R}^{m+1}_+ \,\right\vert \, \abs{{\D}U(x)} \geq t \right\} \right) \\
	\leq A \exp \Biggl( B' \iint \limits_{\substack{ (x,y) \in \R^m \times \R^m \\ d(u(x),u(y)) \geq \delta}} \frac{1}{\abs{x-y}^{2m}} \,{\d}x\,{\d}y \Biggr) \iint \limits_{\R^m \times \R^m} \frac{d(u(x),u(y))^{m+1}}{\abs{x-y}^{2m}}\,{\d}x\,{\d}y,   
   \end{multline}
   where $B' = L^{m+1} B$ depends only on $m$, $L$, and $\reach \mathcal{N}$. 
 Moreover, one can take 
   $U \in C\left( \R^{m+1}_+ \setminus S, \mathcal{N} \right)$, where the singular set 
   $S \subset \R^{m+1}_+$ is a finite set whose cardinality is controlled by the 
   right-hand side of~\eqref{eq:II-1} or, in case $\diam_{\mathcal{N}}{u(\S^m)} \leq L$, 
   by the right-hand side of~\eqref{eq:II-2}.
\end{theorem}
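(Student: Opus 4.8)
The plan is to repeat the proof of Theorem~\ref{thm:I} \emph{mutatis mutandis}, with \cite[Theorem~1.2]{BVS} (the half-space version of the extension theorem) in place of \cite[Theorem~1.1]{BVS}. Since $\mathcal{N}$ carries an isometric Euclidean embedding into some $\R^\nu$ with $\reach \mathcal{N} > 0$, the nearest-point projection $\Pi$ onto $\mathcal{N}$ is well defined and of class $C^1$ on the open tube $\{z \in \R^\nu : \dist(z,\mathcal{N}) < \reach \mathcal{N}\}$, by the results of~\cite{LeobacherSteinicke}. This is exactly what point~($i$) of the introduction demands, and it is the sole geometric ingredient that the construction of~\cite{BVS} needs in order to return a map valued in $\mathcal{N}$; the compactness of $\mathcal{N}$ intervenes only once more, through point~($ii$), which I defer to the very end.

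First I would run, \emph{verbatim}, the Whitney-type construction of~\cite[Theorem~1.2]{BVS}, now over the half-space $\R^{m+1}_+$ with boundary $\R^m$. This produces a map $U \in W^{1,1}_{\loc}(\R^{m+1}_+,\mathcal{N})$ with $\tr_{\R^m} U = u$, continuous outside a finite set $S \subset \R^{m+1}_+$, together with the core weak-type bound
\begin{equation*}
	t^{m+1}\, \mathcal{L}^{m+1}\bigl(\{ x \in \R^{m+1}_+ : \abs{\D U(x)} \geq t\}\bigr)
	\leq A \exp\Biggl( B \iint\limits_{\substack{(x,y) \in \R^m \times \R^m \\ d(u(x),u(y)) \geq \delta}} \frac{d(u(x),u(y))^{m+1}}{\abs{x-y}^{2m}}\,\d x\,\d y \Biggr)\iint\limits_{\R^m \times \R^m} \frac{d(u(x),u(y))^{m+1}}{\abs{x-y}^{2m}}\,\d x\,\d y,
\end{equation*}
with $A = A(m)$, $B = C(m)/(\reach \mathcal{N})^{m+1}$, and $\delta = \delta(m,\reach \mathcal{N})$ as in~\eqref{eq:constants}, and with $\# S$ bounded by the right-hand side. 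The only deviation from~\cite{BVS} is that the single step invoking $\diam \mathcal{N} < \infty$ is postponed: every intermediate estimate rests on the $C^1$ regularity of $\Pi$ and on the uniform geometric control that positive reach yields on the $\reach$-tube (in particular, principal curvatures bounded by $1/\reach \mathcal{N}$), and never on compactness.

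It then remains to cast the truncated Gagliardo energy in the exponent into the two desired shapes. For~\eqref{eq:II-1}, I would simply drop the truncation: since $\{(x,y) : d(u(x),u(y)) \geq \delta\} \subset \R^m \times \R^m$, the exponent is at most $B \iint_{\R^m \times \R^m} \abs{x-y}^{-2m}\, d(u(x),u(y))^{m+1}\,\d x\,\d y$, with no hypothesis on $u$ beyond finiteness of its Gagliardo seminorm, which is~\eqref{eq:II-1}. For~\eqref{eq:II-2}, assuming $u \in L^\infty$ with $\norm{u}_{L^\infty(\R^m)} \leq L$, I would instead invoke the second elementary inequality recorded in the introduction — the control of the truncated Gagliardo energy by $(2\norm{u}_{L^\infty})^{m+1}$ times the gap potential — to bound the exponent by $(2L)^{m+1} B \iint_{d(u(x),u(y)) \geq \delta} \abs{x-y}^{-2m}\,\d x\,\d y = B' \cdot (\text{gap potential})$, where $B' = (2L)^{m+1} B = C(m)\,(2L/\reach \mathcal{N})^{m+1}$, as in~\eqref{eq:constants}, giving~\eqref{eq:II-2}.

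The main obstacle is not a new idea but the audit supporting the second paragraph: one must confirm that the proof of~\cite[Theorem~1.2]{BVS} uses no global consequence of compactness other than~($i$)–($ii$) — in particular, no positive injectivity radius and no two-sided bound on the second fundamental form of $\mathcal{N}$ beyond what $\reach \mathcal{N}$ already encodes. The point that makes the non-compact and possibly \emph{unbounded} case go through is that $U$ is assembled from \emph{local} averages of $u$ composed with $\Pi$: a cube is ``good'' precisely when the local oscillation of $u$ is smaller than $\reach \mathcal{N}$, in which case the relevant average lands in the $\reach$-tube regardless of where on $\mathcal{N}$ the values of $u$ sit, so that global boundedness plays no role in defining $U$ and enters only through~($ii$). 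The passage from $\S^m$, $\B^{m+1}_1$ to $\R^m$, $\R^{m+1}_+$ is then purely combinatorial, the dyadic Whitney decomposition of the half-space replacing that of the ball with identical estimates, and the bound on $\# S$ by the right-hand side of~\eqref{eq:II-1} (respectively~\eqref{eq:II-2}) comes from the same count of ``bad'' cubes as in~\cite{BVS}.
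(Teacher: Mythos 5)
Your proposal is correct and follows essentially the same route as the paper: invoke the positive reach and the results of \cite{LeobacherSteinicke} in place of compactness for the nearest-point projection, observe that \cite[Proposition~3.1]{BVS} and all preparatory results of \cite{BVS} are compactness-free, and then modify only the start of the proof of \cite[Theorem~1.2]{BVS} — bounding the truncated quantity $(d(u(y),u(z))-\eta\delta_{\mathcal N}/2)_+^{m+1}$ either by the full Gagliardo integrand (general case) or by $(2L)^{m+1}$ (bounded case) — before running the rest verbatim. The only cosmetic difference is that you state a single intermediate estimate with the truncated Gagliardo energy in the exponent and then relax it in two directions, whereas the paper makes two separate choices of the parameter $\lambda$ (one per case); these are equivalent by monotonicity of the exponential.
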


As in \cite{BVS}, the symbol $W^{1,1}_{\rm loc}\left( \overline{\R^{m+1}_+}, \mathcal{N} \right)$
means that $U$ is weakly differentiable and that 
$\int_K \abs{{\D}U} < +\infty$ for every compact set 
$K \subset \overline{\R^{m+1}_+}$.

In our last theorem, we consider, as in \cite{BVS}, extensions of maps 
defined over $\S^m$ to $\B^{m+1}_1$, where we endow $\B^{m+1}_1$ with the 
Poincar\'e metric
\[
	g_{\rm hyp}(x) = \frac{4g_{\rm eucl}(x)}{\left(1-\abs{x}^2\right)^2}. 
\]
We recall that $\left(\B^{m+1}_1, g_{\rm hyp}\right)$ is a standard model 
(called the {\em Poincar\'{e} ball model}) 
for the hyperbolic space $\H^{m+1}$. In the statement below, $\mathcal{H}^{m+1}$ 
denotes the $(m+1)$-dimensional Hausdorff measure.
\begin{theorem}\label{thm:III}
   Let $m \in \N \setminus \{0\}$ and let $\mathcal{N}$ be a connected
   Riemannian 
   manifold admitting an isometric Euclidean embedding with positive reach. 
   There exists constants $A$, $B$, 
   $\delta \in (0,\infty)$, depending only on $m$ and the reach of $\mathcal{N}$ 
   and given by~\eqref{eq:constants}, 
   such that for every 
   $u \in W^{m/(m+1),m+1}\left(\S^m, \mathcal{N}\right)$ there exists a mapping 
   $U \in W^{1,1}\left(\B^{m+1}_1, \mathcal{N}\right)$ such that 
   $\tr_{\S^m} U = u$ and for every $t \in (0,\infty)$
   \begin{multline}\label{eq:III-1}
   	\mathcal{H}^{m+1}\left(\left\{ \left. x \in \H^{m+1} \,\right\vert \, \abs{{\D}U(x)} \geq t \right\} \right) \\
	\leq \frac{A}{t^{m+1}} \exp \Biggl( B \iint \limits_{\S^m \times \S^m} \frac{d(u(x),u(y))^{m+1}}{\abs{x-y}^{2m}} \,{\d}x\,{\d}y \Biggr) \iint \limits_{\S^m \times \S^m} \frac{d(u(x),u(y))^{m+1}}{\abs{x-y}^{2m}}\,{\d}x\,{\d}y.
   \end{multline}
   If, in addition, $\diam_{\mathcal{N}}{u(\S^m)} \leq L$, then
   \begin{multline}\label{eq:III-2}
	\mathcal{H}^{m+1}\left(\left\{ \left. x \in \mathbb{H}^{m+1} \,\right\vert \, \abs{{\D}U(x)} \geq t \right\} \right) \\
	\leq \frac{A}{t^{m+1}} \exp \Biggl( B' \iint \limits_{\substack{ (x,y) \in \S^m \times \S^m \\ d(u(x),u(y)) \geq \delta}} \frac{1}{\abs{x-y}^{2m}} \,{\d}x\,{\d}y \Biggr) \iint \limits_{\S^m \times \S^m} \frac{d(u(x),u(y))^{m+1}}{\abs{x-y}^{2m}}\,{\d}x\,{\d}y,   
   \end{multline}
   where $B' = L^{m+1} B$ depends only on $m$, $L$, and $\reach \mathcal{N}$. 
   Moreover, one can take 
   $U \in C\left( \H^{m+1} \setminus S, \mathcal{N} \right)$, where the singular set 
   $S \subset \H^{m+1}$ is a finite set whose cardinality is controlled by the 
   right-hand side of~\eqref{eq:III-1} or, in case $\diam_{\mathcal{N}}{u(\S^m)} \leq L$, 
   by the right-hand side of~\eqref{eq:III-2}.
\end{theorem}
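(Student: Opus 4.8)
The plan is to rerun the proof of \cite[Theorem~1.3]{BVS} almost verbatim, replacing the two uses of compactness isolated in the Introduction by the positive-reach hypothesis and, for the sharp estimate, by the boundedness of $u$. Fix an isometric embedding $\iota\colon\mathcal{N}\hookrightarrow\R^\nu$ with $\rho:=\reach\iota(\mathcal{N})>0$, and set $\mathcal{N}_\rho:=\{z\in\R^\nu:\dist(z,\iota(\mathcal{N}))<\rho\}$. By \cite{LeobacherSteinicke} the nearest-point projection $\Pi\colon\mathcal{N}_\rho\to\iota(\mathcal{N})$ is well defined and of class $C^1$; this is exactly the structure that compactness supplied in \cite{BVS} through purpose~($i$), and it is the only feature of $\mathcal{N}$ on which the construction of the extension rests. (Positive reach also forces $\mathcal{N}$ to be complete, so that $W^{1,1}_{\loc}(\H^{m+1},\mathcal{N})$ is well defined independently of the embedding.) With $\Pi$ at hand I would build $U$ on the Poincar\'e ball exactly as in \cite{BVS}: take the scale-adapted average of $u$, compose it with $\Pi$ wherever the average lands in $\mathcal{N}_\rho$, and let the singular set $S$ be the finite collection of points at which the average leaves $\mathcal{N}_\rho$. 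The resulting $U\in W^{1,1}_{\loc}(\H^{m+1},\mathcal{N})$ is continuous on $\H^{m+1}\setminus S$ and satisfies $\tr_{\S^m}U=u$ on the boundary at infinity.

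The weak-type bound for $\abs{\D U}$ measured against $\mathcal{H}^{m+1}$ differs from the Euclidean ball of Theorem~\ref{thm:I} only in the domain geometry: since $\mathfrak{h}=\lambda^2 g_{\rm eucl}$ with $\lambda(x)=2/(1-\abs{x}^2)$, one has $\abs{\D U}_{\mathfrak{h}}=\lambda^{-1}\abs{\D U}_{\rm eucl}$ and $\d\mathcal{H}^{m+1}=\lambda^{m+1}\,\d\mathcal{L}^{m+1}$, so, now measuring volume with $\mathcal{H}^{m+1}$ and the gradient in the metric $\mathfrak{h}$, the bound takes the stated form with prefactor $A/t^{m+1}$ and is produced exactly by the computation of \cite[Theorem~1.3]{BVS}, which sees $\mathcal{N}$ only through $\Pi$. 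Read in the non-compact setting, that computation yields the full Gagliardo energy as linear leading factor and, in the exponent, the constant $C(m)/\rho^{m+1}$ times the \emph{truncated} Gagliardo energy $\iint_{d(u(x),u(y))\geq\delta}\frac{d(u(x),u(y))^{m+1}}{\abs{x-y}^{2m}}\,\d x\,\d y$.

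It then remains to rewrite the exponent. For~\eqref{eq:III-1} I would bound the truncated Gagliardo energy above by the full one — valid for every $u\in W^{m/(m+1),m+1}(\S^m,\mathcal{N})$ and requiring no boundedness — obtaining the exponent constant $B=C(m)/\rho^{m+1}$ of~\eqref{eq:constants}. For the sharper~\eqref{eq:III-2}, under $\norm{u}_{L^\infty(\S^m)}\leq L$, I would invoke purpose~($ii$): by the elementary reverse inequality recorded in the Introduction the truncated Gagliardo energy is at most $(2L)^{m+1}$ times the gap potential, so substituting this upper bound into the exponent converts the truncated Gagliardo energy into the gap potential at the cost of the factor $(2L)^{m+1}$, giving $B'=C(m)(2L/\rho)^{m+1}$. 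In both cases the cardinality of $S$ is controlled by the corresponding right-hand side, as in \cite{BVS}.

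The crux is bookkeeping rather than any new estimate: one must verify that \emph{every} appeal to compactness in \cite[Theorem~1.3]{BVS} is accounted for. The construction of $U$, the $C^1$ regularity of $\Pi$, the control of $\#S$, and the conformal change of variables all depend on $\mathcal{N}$ solely through the $C^1$ projection onto a tube of uniform width $\rho$, which positive reach furnishes. The one genuinely quantitative use of compactness in \cite{BVS} is the crude bound $d(u(x),u(y))\leq\diam\mathcal{N}$, which we replace by $d(u(x),u(y))\leq 2L$; this is what makes $B'$ depend on $L$ in place of $\diam\mathcal{N}$ and why~\eqref{eq:III-2} requires $u$ bounded while~\eqref{eq:III-1} does not.
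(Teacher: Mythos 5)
Your proposal is correct and takes essentially the same approach as the paper: both rerun the \cite{BVS} machinery, with positive reach (via \cite{LeobacherSteinicke}) supplying the well-defined, smooth nearest-point projection in place of compactness, the pointwise bound $\left(d(u(y),u(z))-\eta\delta_{\mathcal N}/2\right)_+\leq d(u(y),u(z))$ yielding the full Gagliardo energy in the exponent for~\eqref{eq:III-1}, the bound $d\leq 2L$ converting the exponent to the gap potential with $B'=(2L)^{m+1}B$ for~\eqref{eq:III-2}, and the hyperbolic case obtained from the half-space case by the conformal computation already in \cite{BVS}. The only (cosmetic) divergence is that the paper routes Theorem~\ref{thm:III} through Theorem~\ref{thm:II} and works in the tube of \emph{half} the reach (setting $2\delta_{\mathcal N}:=\reach\mathcal{N}$, so that the projection and its derivative are uniformly controlled where reprojection occurs), whereas you reproject on the full tube $\mathcal{N}_\rho$; quantitatively one should shrink to a fixed fraction of $\rho$, exactly as the paper's normalisation does.
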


Establishing if a Euclidean embedding of a non-compact Riemannian 
manifold has positive reach is notoriously a very difficult task, in general. 
Therefore, our results would be of very limited interest without 
a reasonable criterion for identifying Riemannian manifolds with this 
property. 
In this respect, A.~Petrunin proved in~\cite{P} the 
following theorem.
\begin{mainthm}[{Petrunin, \cite{P}}]\label{thm:p}
	Suppose $\mathcal{N}$ is a complete, smooth, connected Riemannian 
	manifold with $1$-bounded geometry. 
	Then, $\mathcal{N}$ admits an isometric tubed embedding into 
	a Euclidean space $\R^\nu$, for some $\nu \in \N$, if and only if 
	$\mathcal{N}$ has uniformly polynomial growth. The number $\nu$ can be 
	estimated in terms of the dimension $n$ of $\mathcal{N}$ and of the 
	degree of the growth polynomial of $\mathcal{N}$.
\end{mainthm}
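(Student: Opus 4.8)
Theorem~\ref{thm:p} is Petrunin's, and its full proof is carried out in~\cite{P}; here I only indicate the strategy I would follow, noting that a \emph{tubed embedding} is precisely an isometric embedding of positive reach. Being an equivalence, the statement splits into two implications, of which I expect the direction ``tubed embedding $\Rightarrow$ uniformly polynomial growth'' to be the elementary one, provable by a tube-volume comparison. Suppose $\mathcal{N} \subset \R^\nu$ isometrically with $\reach \mathcal{N} \geq r > 0$, fix $p \in \mathcal{N}$ and $R > 0$, and pick $\rho \in (0,r)$. Since the reach is at least $r$, the normal disks of radius $\rho$ issued from points of $\mathcal{N}$ are pairwise disjoint, so the corresponding $\rho$-tube around $\mathcal{N} \cap \B^\nu_R(p)$ has $\mathcal{L}^\nu$-measure at least $c_n \rho^{\nu-n} \mathcal{H}^n(\mathcal{N} \cap \B^\nu_R(p))$; on the other hand this tube is contained in $\B^\nu_{R+\rho}(p)$, so its measure is at most $\omega_\nu (R+\rho)^\nu$. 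As the extrinsic distance never exceeds the intrinsic one, the intrinsic ball of radius $R$ about $p$ lies inside $\mathcal{N} \cap \B^\nu_R(p)$, and the choice $\rho = \min\{r/2,\,R\}$ then yields a bound $\mathcal{H}^n\big(B^{\mathcal{N}}_R(p)\big) \leq C R^\nu$ with $C$ independent of $p$, that is, uniformly polynomial growth of degree at most $\nu$.

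The converse, ``uniformly polynomial growth $\Rightarrow$ tubed embedding,'' is where the whole weight of the argument lies and is the genuine content of the theorem. The plan would be to build an isometric map $F : \mathcal{N} \to \R^\nu$ by a Nash-type construction adapted to the non-compact setting, and then to bound its reach from below \emph{uniformly}. The $1$-bounded geometry hypothesis---a uniform lower bound on the injectivity radius together with uniform bounds on the curvature tensor and its first covariant derivative---yields a cover of $\mathcal{N}$ by charts of a fixed size on which the metric and the transition maps are uniformly controlled; this is the \emph{local} ingredient, producing an embedding whose second fundamental form is uniformly bounded, hence whose reach cannot collapse at small scale. The uniformly polynomial growth hypothesis is then what keeps the construction finite-dimensional: it bounds, at every scale, the multiplicity of a suitably chosen cover, which is exactly the information needed to assemble the local pieces into a single map into an $\R^\nu$ with $\nu$ depending only on $n$ and on the degree of the growth polynomial.

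The main obstacle, as always when one wants positive reach, is the \emph{global} part: excluding ``bottlenecks,'' i.e.\ pairs of points that are far apart in $\mathcal{N}$ but whose images in $\R^\nu$ come close together. Bounding the second fundamental form only prevents a local collapse of the reach; a uniform lower bound on the global reach requires separating coordinates that keep distant portions of $\mathcal{N}$ uniformly apart in $\R^\nu$ without inflating the ambient dimension. This is precisely where the polynomial growth must be spent, and balancing the separation requirement against the dimension count is, I expect, the delicate core of the argument and the source of the quantitative dependence of $\nu$ on $n$ and on the growth degree. For the actual execution I would refer to~\cite{P}.
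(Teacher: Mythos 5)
Your proposal is in essence the same as the paper's treatment: Theorem~\ref{thm:p} is quoted from \cite{P} and not reproved here, and the paper, like you, only records the strategy --- a Nash-type construction made \emph{uniformly} smooth by exploiting bounded geometry, with the polynomial growth controlling the covering data that keep the ambient dimension $\nu$ bounded in terms of $n$ and the growth degree. Your high-level account of the sufficiency direction, including the identification of the global ``bottleneck'' problem as the place where positive reach (as opposed to mere bounded second fundamental form) must be enforced, is consistent with Petrunin's scheme as summarised in Section~\ref{sec:embeddings}; the one ingredient the paper records that you omit is how the hypothesis is relaxed from bounded geometry of all orders down to $1$-bounded geometry (via Eichhorn's uniform regularity of the metric in normal charts and G\"{u}nther's proof of the Nash theorem), but since you defer the execution to \cite{P} this is not a gap.

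What you add beyond the paper is an actual argument for the necessity direction, and it is correct in outline: for $\rho<\reach\mathcal{N}$ the normal disks are pairwise disjoint, the intrinsic ball $B_{\mathcal{N}}(p,R)$ sits inside $\mathcal{N}\cap\B^{\nu}_{R}(p)$ because extrinsic distance is dominated by intrinsic distance, and comparing the tube volume with $\omega_\nu(R+\rho)^{\nu}$ gives uniformly polynomial growth of degree at most $\nu$. (This matches the paper's remark that uniformly polynomial growth is among the necessary conditions listed in \cite{P}.) Two small points to tighten: the lower bound $c\,\rho^{\nu-n}\mathcal{H}^{n}\bigl(\mathcal{N}\cap\B^{\nu}_{R}(p)\bigr)$ for the tube volume needs a quantitative justification --- e.g.\ that for $\rho\leq r/2$ the normal exponential map has Jacobian bounded below by a dimensional constant, since a set with reach at least $r$ has principal curvatures at most $1/r$, or equivalently that the nearest-point projection is $2$-Lipschitz on the half-reach tube --- and the constant there depends on $\nu$ as well as $n$. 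Neither affects the conclusion.
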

In Theorem~\ref{thm:p}, a {\em tubed embedding} means a   
Euclidean embedding with positive reach, {\em $1$-bounded geometry} 
refers to the strict positivity of the injectivity radius along with the boundedness 
of the first covariant derivative of the Riemann curvature tensor, 
and {\em uniformly polynomial growth} to the fact that the volume of geodesic balls is 
controlled by a uniform polynomial function 
(called a {\em growth polynomial}) of the radius 
(precise definitions are given in Section~\ref{sec:embeddings}).
By results in \cite{LeobacherSteinicke}, if $\eps > 0$ 
is the reach 
of the tubed embedding of $\mathcal{N}$ into $\R^{\nu}$ provided by 
Theorem~\ref{thm:p}, then the nearest-point projection is smooth up to 
the boundary of each $\delta$-neighbourhood of $\mathcal{N}$, 
for every $\delta \in (0,\eps)$.

Although Theorem~\ref{thm:p} is not a characterisation of complete  
Riemannian manifolds admitting a tubed embedding, but only a 
sufficient criterion, 
as observed in \cite{P} the following conditions 
are instead necessary to this purpose: {($i$)} bounded sectional curvature; 
{($ii$)} positive injectivity radius; {($iii$)} uniformly polynomial growth. 
At present, it is unknown whether such conditions are also sufficient or not 
(see the discussion in \cite[§17]{P} for more information).

Thanks to Theorem~\ref{thm:p}, many complete, non-compact 
Riemannian manifolds are recognised as admitting tubed Euclidean embeddings. 
This allows us to apply our theorems in several situations not covered 
by the results in~\cite{BVS}.  
In the conclusive section of this paper, Section~\ref{sec:examples}, 
we discuss some examples. We emphasise in particular the following ones:
\begin{itemize}
	\item Warped products of the type $\R \times_f \mathcal{M}$, where 
	$\mathcal{M}$ is a compact Riemannian manifold and the warping function 
	$f$ satisfies the (mild) assumptions in Example~\ref{ex:warped}; 
	\item The universal covering of any compact Riemannian manifold whose 
	fundamental group has {\em polynomial growth} (see, e.g., \cite[Section~3.I]{GHLf}).
	This includes:
	\begin{itemize} 
		\item The universal covering of any compact Riemannian manifold with 
		Ricci curvature bounded below;
		\item The universal covering of any compact Riemannian manifold with 
		Abelian fundamental group.
	\end{itemize} 
\end{itemize}

In \cite{BVS}, the first main result that is proven is Theorem~1.2, from 
which Theorem~1.1 and Theorem~1.3 (essentially) follow by conformal 
parametrisation of 
the ball $\B^{m+1}_1$ (once endowed with the canonical metric and the other 
with the Poincar\'{e} metric) by $\R^{m+1}_+$. 
Quoting the discussion in \cite[p.~6]{BVS}, we observe that, although 
standard conformal transformations between $\B^{m+1}_1$, $\R^{m+1}_+$, 
and $\H^{m+1}$ preserve the Gagliardo energy of the trace in the right-hand sides 
of~\eqref{eq:I-2},~\eqref{eq:II-2}, and~\eqref{eq:III-2}, the 
corresponding gap potentials and the strong-type 
quantities $\int_{\B^{m+1}} \abs{{\D}U}^{m+1}$, $\int_{\R^{m+1}_+} \abs{{\D}U}^{m+1}$, 
$\int_{\H^{m+1}} \abs{{\D}U}^{m+1}$ corresponding to the weak-type 
quantities in their left-hand side, the left-hand sides as such are not conformally 
invariant, so \cite[Theorem~1.1, 1.2, and 1.3]{BVS}
are not equivalent to each other (and, therefore, the same is true for 
Theorem~\ref{thm:I},~\ref{thm:II}, and~\ref{thm:III} in the present paper). 
As commented in \cite{BVS}, this explains why one has  
the same construction of the extension in all the 
three cases, but three different particular estimates on $U$.
We follow the same line and, in Section~\ref{sec:proof}, we sketch the proof of 
Theorem~\ref{thm:II}. This will be enough because the differences with 
respect to \cite[Theorem~1.2]{BVS} are confined to the beginning of the 
proof, which for the rest follows exactly as in \cite{BVS}. 
With these modifications, Theorem~\ref{thm:I} and 
Theorem~\ref{thm:III} follow word-for-word as in \cite{BVS}, and therefore 
we will not repeat the arguments. To keep at 
minimum the size of this note, we always refer to the notation of 
\cite{BVS}, to which the reader is addressed for complete details. 
In the same spirit, we do not really try to explain the strategy of 
the proofs in \cite{BVS}, which is very neatly explained there. We only 
mention that the key point consists in dividing the domain appropriately, 
using $\lambda$-adic cubes, in regions where the {\em extension by averaging} 
of $u$ (see Section~\ref{sec:averaging}) is close (in a precise quantitative sense) 
to $\mathcal{N}$ and their complement. 
In the first, ``good'' regions, one can define the required extension by 
reprojecting the extension by averaging onto $\mathcal{N}$, with 
controlled energy, using the nearest-point projection. In the cubes in the 
complement of the good regions, one defines the extension by homogeneous 
extension of the trace on the boundary of the cubes (with respect to their 
barycentres). The fact that the number of ``bad 
cubes'' is exponentially bounded by the Gagliardo energy (or the gap 
potential, in the compact or bounded case) yields the ``bad'' exponential 
term in the Marcinkiewicz-Sobolev estimates above.

\numberwithin{equation}{section}
\numberwithin{definition}{section}
\numberwithin{theorem}{section}
\numberwithin{remark}{section}

\section{Positive reach and tubed embeddings}\label{sec:embeddings}
Following~\cite{P}, given $\mathcal{N}$ and $\mathcal{M}$ Riemannian 
manifolds, we say that that an embedding 
$f : \mathcal{N} \to \mathcal{M}$ 
is a {\em tubed embedding} if its image $f(\mathcal{N})$
has {\em positive reach} in $\mathcal{M}$. This implies that 
$f(\mathcal{N})$ is closed in $\mathcal{M}$ (in particular, a tubed 
embedding is a closed embedding) and,    
by definition, it means that there exists an $\eps$-neighbourhood 
$\mathcal{O}_\eps$ of 
$f(\mathcal{N})$ in $\mathcal{M}$, for some $\eps > 0$, on which 
the {\em nearest-point projection} 
$\Pi_{\mathcal{N}} : \mathcal{O}_\eps \to f(\mathcal{N})$, associating with each 
$z \in \mathcal{O}_\eps$ its closest point on $f(\mathcal{N})$, 
is well defined. In this note we are only interested in the case in which 
$\left(\mathcal{M}, g^\mathcal{M}\right) = \left(\R^\nu, {\rm eucl}\right)$.

\begin{notation}
Let $(\mathcal{N}, g)$ be a smooth Riemannian manifold of finite 
dimension $n$; we will denote: $\nabla$ its Levi-Civita connection (extended to all tensor bundles 
over $\mathcal{N}$); $\Riem$ its Riemann curvature tensor; $\inj \mathcal{N}$ its injectivity radius.
Unless otherwise stated, we assume the metric $g$ is fixed once and for all, 
and therefore we shall usually omit it from our notation. In this note, we will always deal with isometric Euclidean embeddings with positive reach  
(most often, the embedding provided by Theorem~\ref{thm:p}), and we will 
omit explicit reference to the embedding in our notation and terminology 
(for instance, we will speak of the ``reach of $\mathcal{N}$'', in place of 
the ``reach of $f(\mathcal{N})$ in $\R^\nu$'').
\end{notation}

\begin{definition} 
We say that $\mathcal{N}$ has \emph{uniformly polynomial growth} if there is 
a polynomial $p$ (called a {\em growth polynomial}) such that
\[
	\vol B_\mathcal{N}(x,R) \leq p(R),
\]
for any $x \in \mathcal{N}$ and any $R > 0$, 
where $B_\mathcal{N}(x,R)$ denotes the geodesic 
ball of centre $x$ and radius $R$ in $\mathcal{N}$. 
\end{definition}

\begin{definition}\label{def:bdd-geom} 
We say that $\mathcal{N}$ has 
{\em $1$-bounded geometry} if and only if:   
{(i)} $\inj \mathcal{N} > 0$; and {(ii)}  
	$\abs{\nabla \Riem} \leq C$ (in every normal chart).
\end{definition}

By Nash's embedding theorem, any Riemannian manifold $\mathcal{N}$ can be 
isometrically embedded into a Euclidean space $\R^\nu$, 
for some $\nu \in \N$.
Nash's construction provides the general upper-bound 
$\nu \leq \frac{1}{2} n(n+1)(3n+11)$. However, Nash's argument 
does not yield, in general, a uniform $\eps$-neighbourhood of 
$f(\mathcal{N})$ on which the nearest-point projection is well-defined. 
(Of course, if $\mathcal{N}$ is compact, then any isometric embedding 
of $\mathcal{N}$ into $\R^\nu$ has positive reach and smooth nearest-point 
projection --- see, e.g., {\cite[Section~4]{Federer}} or
\cite[Theorem~1 in Section~2.12.3]{Simon} for a fully detailed 
proof.)
Theorem~\ref{thm:p} stems from Petrunin's key idea that, under the assumptions that $\mathcal{N}$ 
has $1$-bounded geometry and uniformly polynomial growth, it is possible to improve on Nash's construction 
so as to obtain uniform bounds on the derivatives of $f$. In turn, these 
uniform bounds produce a uniform $\eps$-neighbourhood of 
$f(\mathcal{N})$ (see \cite[§4]{P} for more  
information about the strategy of the proof of Theorem~\ref{thm:p}).

To conclude this section, we mention that the proof of 
Theorem~\ref{thm:p} yields an upper-bound for $\nu$. Such an upper-bound 
is expressed in terms of $n$ and $\deg p$ only but making it explicit would 
require some further explanations on the constructions in \cite{P} that are not relevant 
to the purposes of the present note. 
We refer the interested reader to \cite[§13]{P}. 

\section{Extensions by averaging}\label{sec:averaging}

We recall here some terminology and notation from \cite{BVS} that will 
be used in Section~\ref{sec:proof}.

We write $x = (x',x_{m+1})$ for points of $\R^{m+1}_+$.
If $u : \R^m \to \R^\nu$ is any measurable map, we denote 
$V : \R^{m+1}_+ \to \R^\nu$ its \emph{extension by averaging}, defined 
by (see \cite[Eq.~(18)]{BVS} and \cite{Gagliardo})
\begin{equation}\label{eq:defV}
	V(x) := \frac{1}{x^m_{m+1}} \int_{\R^m} u(z) \varphi\left( \frac{x'-z}{x_{m+1}} \right) \,{\d}z 
	= \int_{\R^m} u(x'-x_{m+1} z) \varphi(z)\,{\d}z,
\end{equation}
where $\varphi \in C^\infty(\R^m,\R)$ is such that 
\[
	\int_{\R^m} \varphi\,{\d}z = 1, \quad \spt \varphi \subseteq \B^{m+1}_1, 
	\quad \norm{\varphi}_{L^\infty(\R^m)} \leq 1, \quad 
	\norm{{\D}\varphi}_{L^\infty(\R^m)} \leq 2.
\]
(The specific values of the  $L^\infty$-bounds above do not play any 
r\^{o}le but it is useful to fix such bounds once and for all.) 

\noindent The relevant properties of the extension by averaging are gathered in  
\cite[Proposition~2.2]{BVS}.

As in \cite[Eq.~(24)]{BVS}, for any given $\lambda \in (1,\infty)$, every 
$k \in \Z$, every $\tau \in (1,\lambda)$, and for any given $h \in \R^m$, we 
define the families of cubes
\[
\begin{aligned}
	& \mathcal{Q}_{\lambda, \tau, k, h} := 
	\left\{ \left.\tau \lambda^{-k}\left([0,1]^m + j + h\right) \, \right\vert \, j \in \Z^m \right\}, \\
	& \mathcal{Q}^+_{\lambda, \tau, k, h} := 
	\left\{ \left.\tau \lambda^{-k}\left([0,1]^m + \left(j, (\lambda-1)^{-1}\right) + h\right) \,\right\vert \, j \in \Z^m \right\}.
\end{aligned}
\]
We recall, in addition, the following proposition.
\begin{prop}[{\cite[Proposition~3.1]{BVS}}]\label{prop:BVS-3.1}
	Let $m \in \N \setminus \{0\}$. There exists constants $\eta \in (0,1)$ 
	and $C \in (0,\infty)$ depending only on $m$ such that for every 
	$\delta \in (0,\infty)$, for every $\lambda \in [2,\infty)$, for 
	every measurable function $u : \R^m \to \R^\nu$ and every set 
	$Y \subseteq \R^\nu$, if $V$ is an extension by averaging given 
	by~\eqref{eq:defV} and if $u \in Y$ almost everywhere in $\R^m$, then 
	\begin{multline}
	\int_1^\lambda \sum_{k \in \Z} \int_{[0,1]^m} 
		\sharp\left\{ \left. Q \in \mathcal{Q}^+_{\lambda,\tau,k,h} \,\right\vert \,\sup_{x \in \partial Q} \dist(V(x),\mathcal{N}) \geq \delta \right\} \,{\d}h\frac{\d{\tau}}{\tau} \\
		\leq \frac{C}{\delta^{m+1}} \iint \limits_{\substack{ (x,y) \in \R^m \times \R^m \\ d(u(x),u(y))\geq \delta}} \frac{(d(u(y),u(z)) - \eta \delta )^{m+1}_+}{\abs{y-z}^{2m}}\,{\d}y\,{d}z.
	\end{multline}
\end{prop}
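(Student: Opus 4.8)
The plan is to reduce the averaged count on the left-hand side to a single pointwise estimate on the extension by averaging $V$, and then to recombine this estimate over the averaged grids $\mathcal{Q}^+_{\lambda,\tau,k,h}$. The scheme has three ingredients: a pointwise oscillation lower bound, a Fubini-type identity turning the sum over the grid parameters into a scale integral, and---the genuinely delicate point---a scale-localisation ensuring summability with the correct homogeneity.

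First I would establish the pointwise bound. Fix $x = (x',x_{m+1}) \in \R^{m+1}_+$ and set $B = B(x',x_{m+1}) \subset \R^m$. For almost every $w_0 \in B$ one has $u(w_0) \in \mathcal{N}$, and, using $\int_{\R^m}\varphi = 1$ together with $\norm{\varphi}_{L^\infty(\R^m)} \leq 1$ and the change of variables $w = x' - x_{m+1}z$ in~\eqref{eq:defV},
\[
	\dist(V(x),\mathcal{N}) \leq \abs{V(x) - u(w_0)} \leq \frac{1}{x_{m+1}^m}\int_B \abs{u(w)-u(w_0)}\,\d w.
\]
Averaging over $w_0 \in B$ gives $\dist(V(x),\mathcal{N}) \leq \omega_m^{-1} x_{m+1}^{-2m}\iint_{B\times B}\abs{u(w)-u(w_0)}\,\d w\,\d w_0$, where $\omega_m$ is the volume of the unit ball of $\R^m$. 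Hence $\dist(V(x),\mathcal{N}) \geq \delta$ forces $\iint_{B\times B}\abs{u(w)-u(w_0)}\,\d w\,\d w_0 \geq \omega_m \delta\, x_{m+1}^{2m}$. Using $\abs{u(w)-u(w_0)} \leq d(u(w),u(w_0)) \leq (d(u(w),u(w_0))-\eta\delta)_+ + \eta\delta$ and choosing $\eta \in (0,1)$ small depending only on $m$, this yields $\iint_{B\times B}(d(u(w),u(w_0))-\eta\delta)_+\,\d w\,\d w_0 \gtrsim \delta\, x_{m+1}^{2m}$. A Hölder inequality with exponents $m+1$ and $(m+1)/m$, inserting the weight $\abs{w-w_0}^{-2m}$ and bounding $\iint_{B\times B}\abs{w-w_0}^2\,\d w\,\d w_0 \lesssim x_{m+1}^{2m+2}$, then gives (the powers of $x_{m+1}$ cancelling exactly) the pointwise estimate
\[
	\mathbf 1\bigl[\dist(V(x),\mathcal{N}) \geq \delta\bigr] \leq \frac{C(m)}{\delta^{m+1}} \iint_{B\times B} \frac{(d(u(w),u(w_0))-\eta\delta)_+^{m+1}}{\abs{w-w_0}^{2m}}\,\d w\,\d w_0,
\]
which is where the exponent $m+1$ and the parameter $\eta$ both enter.

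Next I would treat the grid averaging. For fixed $k$, integrating the count over the translation $h \in [0,1]^m$ turns the sum over $j \in \Z^m$ into an integral over $\R^m$ weighted by $(\tau\lambda^{-k})^{-m}$; summing over $k \in \Z$ and integrating $\tau \in (1,\lambda)$ against $\d\tau/\tau$ then reconstructs the scale-invariant measure $\d s/s$ over all scales $s = \tau\lambda^{-k} \in (0,\infty)$, independently of $\lambda$. Since the cubes of $\mathcal{Q}^+_{\lambda,\tau,k,h}$ sit at height of the order of their side length, the left-hand side is in this way controlled, up to a constant depending only on $m$, by the conformally invariant integral $\int_{\R^{m+1}_+} \mathbf 1[\dist(V(x),\mathcal{N}) \geq \delta]\,\d x/x_{m+1}^{m+1}$; applying the pointwise estimate and Fubini produces, for each pair $(w,w_0)$, the scale weight $\int \mathbf 1[w,w_0 \in B(y,s)]\,\d y\,\d s/s^{m+1}$.

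The hard part will be this last step. With the full-ball pointwise estimate the weight just described \emph{diverges}: for a pair at distance $\rho = \abs{w-w_0}$ the set of centres $y$ with $w,w_0 \in B(y,s)$ has measure $\sim s^m$ for every $s \gtrsim \rho$, so the weight is comparable to $\int_{\rho}^{\infty} s^{m}\, s^{-m-1}\,\d s = \int_\rho^\infty \d s/s = +\infty$. Thus the pointwise estimate alone is too lossy: the same (possibly very close) pair is charged at every scale above its own. The resolution---and the technical heart of the argument in \cite{BVS}---is a scale-localisation charging a bad cube at scale $s$ only to oscillation among pairs at distance comparable to $s$. Concretely, one telescopes $V$ along the $\lambda$-adic heights $s_k \sim \lambda^{-k}$, using that $V(x',s_k) \to u(x') \in \mathcal{N}$ as $k \to +\infty$, and attributes a bad level $k$ to the increment of $V$ between consecutive scales, which is governed by a scale-$s_k$ oscillation. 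The truncation $(\,\cdot\,-\eta\delta)_+$, together with pinning the relevant pairs to the genuine gap $\{d(u(y),u(z)) \geq \delta\}$ rather than merely to $\{d > \eta\delta\}$, is exactly what confines each pair $(y,z)$ to scales $s \sim \abs{y-z}$; then its scale weight reduces to $\int_{c_1\rho}^{c_2\rho} \d s/s = O(1)$, the overlap is bounded, and the homogeneity is correct. Collecting the per-scale contributions with this bounded overlap, and carrying along the dimensional constants $\eta$ and $C(m)$ from the pointwise step, yields the stated inequality.
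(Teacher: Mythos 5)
First, a point of reference: the paper you are working from does \emph{not} prove this proposition --- it is imported verbatim from \cite[Proposition~3.1]{BVS} (the surrounding text only observes that its hypotheses never require compactness of $Y$), so your attempt has to be measured against the argument in \cite{BVS} itself.

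Much of your setup is sound. The pointwise estimate is correct: since $u(w_0)\in Y$ almost everywhere, $\dist(V(x),Y)\le|V(x)-u(w_0)|$, the kernel bounds give the $L^1$ oscillation bound, the truncation $(\,\cdot\,-\eta\delta)_+$ absorbs the threshold for $\eta$ small, and your H\"older step with exponents $m+1$ and $(m+1)/m$ does make the powers of $x_{m+1}$ cancel exactly (this is the scale invariance of the critical seminorm, $sp+m=2m$). The Fubini identity $\int_1^\lambda\sum_{k\in\Z}f(\tau\lambda^{-k})\,\d\tau/\tau=\int_0^\infty f(s)\,\d s/s$ is also right, and --- to your credit --- you correctly diagnose that the unlocalised pointwise bound, fed into this recombination, charges every pair at every scale above its own separation and produces a divergent weight $\int_\rho^\infty \d s/s$. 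One secondary flaw in this part: reducing the cube count to the bulk integral $\int_{\R^{m+1}_+}\mathbf{1}\bigl[\dist(V(x),Y)\ge\delta\bigr]\,\d x/x_{m+1}^{m+1}$ is not legitimate, because a cube is bad through a supremum over its boundary, which is $\mathcal{L}^{m+1}$-null; the bad set can meet many cube boundaries while having zero measure. (This is repairable by charging each bad cube directly through one bad boundary point, with bounded overlap of the associated balls within a fixed grid.)

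The genuine gap is in your resolution of the divergence, which is the actual content of the proposition. Attributing a bad level $k$ to the increment of $V$ between consecutive $\lambda$-adic heights fails: badness need not localise to any consecutive-scale increment. If $u$ alternates between two values $a,b$ at a fine scale $\eps$ throughout a region of size $S$, with $Y=\{a,b\}$, then $V\approx(a+b)/2$ is $\delta$-far from $Y$ at \emph{every} scale between $\eps$ and $S$, while each consecutive increment is negligible; only the full telescoping down to the trace gives total increment $\ge\delta$, spread over unboundedly many levels --- and the truncation $(\,\cdot\,-\eta\delta)_+$ can absorb its $\eta\delta$ loss at boundedly many scales only, so it cannot survive an infinite telescoping. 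Moreover, your claim that the truncation ``together with pinning the pairs to $\{d\ge\delta\}$'' confines each pair to scales $s\sim|y-z|$ is asserted, not proved, and it is not the true mechanism: both constraints act on the \emph{values} $d(u(y),u(z))$, not on the spatial separation $|y-z|$, and in any increment bound pairs at all separations $\le s$ are charged with full weight, so the divergence you yourself identified reappears. What is needed --- and what the argument behind \cite[Proposition~3.1]{BVS} supplies --- is a \emph{spatially localised, single-scale} version of the pointwise lemma, of dichotomy type: if $\dist(V(x),Y)\ge\delta$, then either the pairs $(y,z)\in B\times B$ with $|y-z|\ge\sigma x_{m+1}$ and gap $\ge 2\eta\delta$ occupy measure $\gtrsim x_{m+1}^{2m}$, or one selects two generic base points $w_0,w_1$ at mutual distance $\ge 2\sigma x_{m+1}$ (so every $w\in B$ is $\sigma x_{m+1}$-far from at least one of them) and reduces the oscillation of $V(x)$ entirely to truncated excesses over pairs at separation $\ge\sigma x_{m+1}$, forcing $\dist(V(x),Y)<\delta$, a contradiction. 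With pairs thus pinned to $|y-z|\sim x_{m+1}$, the scale weight collapses to $\int_{c_1\rho}^{c_2\rho}\d s/s=O(1)$, and the averaging in $h$ and $\tau$ handles the remaining geometry of the grids, including the $\lambda$-independence of the constants. Without such a localised lemma, your scheme does not close.
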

Note that $Y$ can be {\em every} set in $\R^\nu$; in particular, it need not 
be compact, although
Proposition~\ref{prop:BVS-3.1} is applied in \cite{BVS} in the proof of 
Theorems~1.1, 1.2, and 1.3 with $Y = \mathcal{N}$ and $\mathcal{N}$ compact.

\begin{remark}
	As one can easily check, none of the preparatory results in \cite{BVS} before the proof of 
	\cite[Theorem~1.2]{BVS} requires the compactness of $\mathcal{N}$.
\end{remark}

\section{Proof of the theorems}\label{sec:proof}

In this section, we prove Theorem~\ref{thm:II}. As already explained in the 
Introduction, all the changes with respect to the proof of \cite[Theorem~1.2]{BVS} 
are confined to the beginning of the proof, therefore a very brief sketch explaining 
those will be enough. Moreover, following the arguments in \cite{BVS}, 
once those changes have been performed on the 
proof of \cite[Theorem~1.2]{BVS}, no others are needed in the proofs of 
\cite[Theorem~1.1 and Theorem~1.3]{BVS}, so that Theorem~\ref{thm:I} and 
Theorem~\ref{thm:III} follow automatically.

\begin{proof}[Proof of Theorem~\ref{thm:II}]
	Without loss of generality, we may assume $\mathcal{N}$ is non-compact. 
	Let $2 \delta_{\mathcal{N}} := \reach \mathcal{N}$ be the reach of 
	$\mathcal{N}$ in $\R^\nu$. By assumption, $2\delta_{\mathcal{N}} > 0$ 
	(moreover, we may assume  
	$\delta_{\mathcal{N}} < \infty$ --- see Example~\ref{ex:trivial} below)	
	and 
	then, by results in \cite{LeobacherSteinicke}, the nearest-point projection 
	$\Pi_{\mathcal{N}} : \mathcal{N} + \B^{\nu}_{2\delta_{\mathcal{N}}} \to \R^{\nu}$ 
	is smooth up to the boundary of the narrower neighbourhood 
	$\mathcal{N} + \B^\nu_{\delta_{\mathcal{N}}}$ of $\mathcal{N}$ in $\R^\nu$.
	
	Exactly as in the proof of \cite[Theorem~1.2]{BVS}, since 
	$u(x) \in \mathcal{N}$ for almost every $x \in \R^m$, 
	by Proposition~\ref{prop:BVS-3.1} (i.e., by \cite[Proposition~3.1]{BVS}), 
	we have 
	\begin{multline}\label{eq:avg-dist}
		\int_1^\lambda \sum_{k \in \Z} \int_{[0,1]^m} 
		\sharp\left\{ Q \in \mathcal{Q}^+_{\lambda,\tau,k,h} \vert \sup_{x \in \partial Q} \dist(V(x),\mathcal{N}) \geq \delta_{\mathcal{N}}/2 \right\} \,{\d}h\frac{\d{\tau}}{\tau} \\
		\leq C_1 \iint \limits_{\substack{ (x,y) \in \R^m \times \R^m \\ d(u(x),u(y))\geq \eta\delta_{\mathcal{N}} /2}} \frac{(d(u(y),u(z)) - \eta \delta_\mathcal{N} / 2 )^{m+1}_+}{\abs{y-z}^{2m}}\,{\d}y\,{d}z,
	\end{multline}
	where $\eta$ is the constant of Proposition~\ref{prop:BVS-3.1} (which 
	depends only on $m$) and 
	$C_1$ is the constant $C = C(m)$ of Proposition~\ref{prop:BVS-3.1} 
	divided by $\delta_\mathcal{N} /2$ (that is, $(\reach \mathcal{N}) / 4$), 
	so that $C_1$ depends only on $m$ and the reach of $\mathcal{N}$.
	
	Now, for almost every $y$ and $z$ in $\R^m$ and every $\eta \in (0,\infty)$, 
	there holds
	\begin{equation}\label{eq:thmII-compu1}
		(d(u(y),u(z)) - \eta \delta_\mathcal{N} / 2 )_+ 
		\leq d(u(y),u(z)).
	\end{equation}
	If, in addition, $\diam_{\mathcal{N}}{u(\S^m)} \leq L$ then for almost every 
	$y$ and $z$ in $\R^m$, we have 
	\[
		(d(u(y),u(z)) - \eta \delta_\mathcal{N} / 2 )^{m+1}_+ 
		\leq (d(u(y),u(z)))^{m+1}
		\leq L^{m+1},
	\]
	so that 
	\[
		\iint \limits_{\substack{ (x,y) \in \R^m \times \R^m \\ d(u(x),u(y))\geq \delta}} \frac{(d(u(y),u(z)) - \eta \delta_\mathcal{N} / 2 )^{m+1}_+}{\abs{y-z}^{2m}}\,{\d}y\,{d}z \leq L^{m+1} \iint \limits_{\substack{ (x,y) \in \R^m \times \R^m \\ d(u(x),u(y))\geq \delta}}\frac{1}{\abs{y-z}^{2m}} \,{\d}y\,{\d}z, 
	\]
	and the rest of the proof follows {\em verbatim} as in \cite{BVS}, setting 
	$\delta := \eta \delta_{\mathcal{N}}/2$ and choosing 
	\[
		\lambda := 1 + \exp\Biggl( 2 C_1 L^{m+1} \iint \limits_{\substack{ (x,y) \in \R^m \times \R^m \\ d(u(x),u(y))\geq \delta}} \frac{1}{\abs{y-z}^{2m}} \,{\d}y\,{\d}z \Biggr).
	\]
	In the end, this yields~\eqref{eq:II-2}, with $B' = 2 C_1 L^{m+1}$ 
	depending only on $m$, $L$, and $\reach{\mathcal{N}}$, and $A$ exactly 
	as in \cite{BVS} depending only on $m$.
	
	In the general case, we set once again $\delta := \eta \delta_{\mathcal{N}}/2$ but 
	(keeping~\eqref{eq:thmII-compu1} in mind)
	\[
		\lambda := 1 + \exp\left( 2 C_1 \iint \limits_{\R^m \times \R^m} \frac{(d(u(y),u(z)))^{m+1}}{\abs{y-z}^{2m}} \,{\d}y\,{\d}z \right).
	\]
	The integral in the definition of $\lambda$ is finite because 
	it is exactly the Gagliardo energy of $u$. Once again, the rest of the 
	proof applies {\em verbatim}, leading in the end to~\eqref{eq:II-1}, 
	with $B$ depending only on $m$ and $\reach{\mathcal{N}}$ and $A$ exactly 
	as in \cite{BVS}, and therefore depending only on $m$.
\end{proof}

\section{Examples}\label{sec:examples}

We conclude this note by listing some examples of target manifolds to which 
Theorem~\ref{thm:I}, Theorem~\ref{thm:II}, and Theorem~\ref{thm:III} apply 
and that are not covered by the results in \cite{BVS}. 

\begin{example}\label{ex:trivial}
	Trivial examples are provided by $\mathcal{N} = \R^\nu$ or any closed convex set in 
	$\R^\nu$.
	By \cite[4.8(8)]{Federer}, a set $\mathcal{C} \subset \R^\nu$ is closed and convex 
	if and only if $\reach{\mathcal{C}} = +\infty$. 
	If $u : \R^m \to \mathcal{C}$, then we can construct 
	its extension by averaging  $V :\R^{m+1}_+ \to \R^\nu$	
	as in~\eqref{eq:defV}, and then compose $V$ with a Lipschitz retraction 
	into $\mathcal{C}$. By the chain rule 
	of Sobolev mappings, we obtain  
	a map $U \in W^{1,m+1}(\R^{m+1},\,\mathcal{C})$ satisfying  
	a \emph{linear} version of~\eqref{eq:II-1}, in which 
	the Gagliardo seminorm appears linearly on the right-hand side 
	(in fact, by the classical 
	trace theory by E.~Gagliardo \cite{Gagliardo}, it also 
	satisfies the linear, strong-type control 
	$
		\int_{\R^{m+1}_+} \abs{\D{U}}^{m+1} \leq C \iint \limits_{\R^m \times \R^m} \frac{\abs{u(x)-u(y)}^{m+1}}{\abs{x-y}^{2m}}\,{\d}x\,{\d}y
	$, where $C=C(m)$ is a positive constant depending only on $m$). 
	As an immediate consequence of the absence of bad cubes, 
	we also obtain linear versions of~\eqref{eq:I-1} and~\eqref{eq:III-1}.
\end{example}

\begin{example}\label{ex:warped}
	A first non-trivial example is provided by warped-products of the type 
	$\mathcal{N} := \R \times_f \mathcal{M}$, where $\mathcal{M}$ is any 
	compact, connected, smooth Riemannian manifold and $f : \R \to \R$ is a 
	smooth warping function, satisfying $0 < a \leq f \leq b$ for positive 
	real numbers $a$ and $b$, and 
	with bounded derivative up to order 3 (so to ensure $\mathcal{N}$ has 
	$1$-bounded geometry). 
	Since every compact manifold has $1$-bounded geometry and uniformly 
	polynomial growth \cite[Section~4]{Federer}, it follows that 
	$\mathcal{N}$ has uniformly polynomial growth.
	Then, Theorem~\ref{thm:p} yields  
	that $\mathcal{N}$ has a tubed embedding in $\R^\nu$ and our theorems can 
	be applied.
\end{example}

\begin{example}\label{ex:covering}
	Let $\mathcal{M}$ be a compact Riemannian manifold and let $\mathcal{N}$ 
	be its universal Riemannian covering. 
	Assume that $\pi_1(\mathcal{M})$ has polynomial growth. 
	Then, $\mathcal{N}$ has uniformly polynomial growth 
	(and viceversa; see, e.g., \cite[Corollary, p.~57]{Gromov}). 
	Moreover, since $\mathcal{M}$ 
	is compact and $\mathcal{N}$ is its universal covering, $\mathcal{N}$ has 
	positive injectivity radius\footnote{In fact, 
	$\inj \mathcal{M} \leq \inj \mathcal{N}$, as for any 
	$p \in \mathcal{M}$ there holds $\inj_p \mathcal{M} \leq \inj_{\hat{p}} \mathcal{N}$ where $\hat{p}$ is any lift of $p$ to $\mathcal{N}$.} 
	and $1$-bounded geometry. 
	Thus, by 
	Theorem~\ref{thm:p}, $\mathcal{N}$ has an isometric Euclidean embedding 
	with positive 
	reach, and Theorems~\ref{thm:I},~\ref{thm:II},~\ref{thm:III} hold for 
	maps with values into $\mathcal{N}$. 

	\noindent Striking examples of compact manifolds whose fundamental group 
	has polynomial growth are:
	\begin{itemize}
		\item Every compact manifold with non-negative Ricci curvature 
		tensor (\cite[Theorem~1]{Milnor});
		\item Every compact manifold $\mathcal{M}$ such that 
		$\pi_1(\mathcal{M})$ is Abelian.
	\end{itemize}
	Several other relevant examples of Riemannian manifolds with 
	uniformly polynomial growth are listed in \cite[p.~57]{Gromov}.	  
\end{example}

\begin{example}
	A Riemannian metric $h$ on $\R^n$ is called 
	\emph{asymptotically Euclidean} if the set 
	$K := \spt\{h - g_{\rm eucl}\}$ 
	is compact. Since $K$ is compact, $h$ and $g_{\rm eucl}$ are comparable 
	and $(\R^n,h)$ has positive injectivity radius (in particular, it is 
	complete).
	Moreover, 
	$(\R^n,h)$ is clearly of $1$-bounded geometry (for any $k \in \N$), hence 
	Theorem~\ref{thm:p} applies.
\end{example}

\begin{example}
	A Riemannian manifold $(\mathcal{N},g)$ is called 
	\emph{conical at infinity} if there exists a compact Riemannian manifold 
	$(\mathcal{M},h_0)$ of dimension $n-1$, a compact set $K$ in 
	$\mathcal{N}$ and a diffeomorphism from $\mathcal{N}\setminus K$ 
	to $[r_0,\infty)\times \mathcal{M}$, such that, outside $K$,
	\[
		g = {\d}r^2 + r^2 h_0,
	\]
	so that $g$ is conical outside $K$. 
	If $(\mathcal{N},g)$ is conical at infinity, then it is complete and it 
	has $1$-bounded geometry (see, e.g., \cite{BGIM}). 
	Moreover, it is clear that manifolds conical at infinity have uniformly 
	polynomial growth. Thus, Theorem~\ref{thm:p} applies to them. 
	(The reference \cite{BGIM} contains several other interesting, explicit 
	examples of manifolds with $1$-bounded geometry.) 
\end{example}

\begin{example}
	\emph{Asymptotically Locally Euclidean (ALE) manifolds} 
	(with a single chart at infinity, see, e.g., \cite[Definition~4.13]{AFM}) 
	have Euclidean (hence, polynomial)
	volume growth and, by tuning the decay rate, one can obtain ALE manifolds 
	with $1$-bounded geometry, to which Theorem~\ref{thm:p} applies.
\end{example}

The list above is merely illustrative and by no means exhaustive.
We conclude our list with two examples of noncompact manifolds which, 
according to Theorem~\ref{thm:p}, do not admit tubed Euclidean embeddings.

\begin{example}
As already remarked in \cite{P}, 
the hyperbolic space {\em cannot} have a tubed Euclidean embedding because 
its volume growth rate is exponential. Note that the hyperbolic space 
satisfies all the other assumptions of Theorem~\ref{thm:p}.
\end{example}

\begin{example}
	By a theorem of Milnor \cite[Theorem~2]{Milnor}, if $\mathcal{M}$ is any 
	compact manifold with all sectional curvature strictly less than zero, 
	then $\pi_1(\mathcal{M})$ has exponential growth. 
	By Gromov's result mentioned in Example~\ref{ex:covering}, 
	the universal Riemannian covering $\mathcal{N}$ of $\mathcal{M}$ 
	{\em cannot} have uniformly polynomial growth. Therefore, it cannot have 
	a tubed Euclidean embedding. Again, $\mathcal{N}$ satisfies all the 
	other assumptions of Theorem~\ref{thm:p}.
\end{example} 

\paragraph{Acknowledgements} 
The author is a member of GNAMPA-INdAM, partially supported by the 
GNAMPA project \textsc{CUP\_E53C25002010001}. 
The author would like to warmly thank  
Bohdan Bulanyi for bringing to his attention a mistake in the first draft of this paper.

\paragraph{Data availability}
No data was used for the research described in the article.

\paragraph{Conflict of interest} 
The author has no Conflict of interest to declare that are relevant to the content of this
article.

\bibliographystyle{plain}
\bibliography{biblio}

\Addresses

\end{document}